\providecommand{\U}[1]{\protect\rule{.1in}{.1in}}
\newtheorem{theorem}{Theorem}
\newtheorem{lemma}{Lemma}
\newenvironment{proof}[1][Proof]{\textbf{#1.}}{\rule{0.5em}{0.5em}}
\begin{document}
\begin{frontmatter}
\title{Infinitesimal Perturbation Analysis for Personalized Cancer Therapy Design\thanksref{footnoteinfo}}
\thanks[footnoteinfo]{The authors' work is
supported in part by NSF under Grants CNS-1239021 and IIP-1430145, by AFOSR
under grant FA9550-12-1-0113, and by ONR under grant N00014-09-1-1051.}
\author[First]{Julia L. Fleck}
\author[First]{Christos G. Cassandras}
\address[First]{Division of Systems Engineering and Center for Information and Systems Engineering,
Boston University, Brookline, MA 02446 USA (e-mail: jfleck@bu.edu, cgc@bu.edu)}
\begin{abstract}                
We use a Stochastic Hybrid Automaton (SHA) model of prostate cancer
evolution under intermittent androgen suppression (IAS) to study a threshold-based policy for therapy design. IAS  is currently one of the most widely
used treatments for advanced prostate cancer. Patients undergoing IAS are
submitted to cycles of treatment (in the form of androgen deprivation) and
off-treatment periods in an alternating manner. One of the main challenges in
IAS is to optimally design a therapy scheme, i.e., to determine when to
discontinue and recommence androgen suppression. The level of prostate
specific antigen (PSA) in a patient's serum is frequently monitored to
determine when the patient will be taken off therapy and when therapy will
resume. The threshold-based policy we propose
is parameterized by lower and upper PSA threshold values and is
associated with a cost metric that combines clinically relevant measures of
therapy success. Using Infinitesimal Perturbation Analysis (IPA), we derive
unbiased gradient estimators of this cost metric with respect to the
controllable PSA threshold values based on actual data and show how these
estimators can be used to adaptively adjust controllable parameters so as to
improve therapy outcomes based on the cost metric defined.
\end{abstract}
\begin{keyword}
stochastic hybrid system (SHS), perturbation analysis, personalized cancer therapy.
\end{keyword}
\end{frontmatter}



\section{INTRODUCTION}

Cancer is widely viewed as a \textquotedblleft disease of
stages\textquotedblright\ \ in which tumors must progress through a series of
discrete \textquotedblleft states\textquotedblright\ in order to ultimately
become malignant [\cite{Hanahan2011}]. This view is particularly suitable to
the development of prostate cancer, which is known to be a multistep process
[\cite{Harrison2012}]. For instance, a patient diagnosed with localized
prostate cancer who has had all the tumor surgically removed is considered to
remain in the state of \textquotedblleft localized disease\textquotedblright%
\ until he progresses to a new state. At each state, distinct therapies can be
prescribed, and the time spent by the patient in any given state is a measure
of the efficacy of the corresponding intervention.

The standard treatment for advanced prostate cancer patients is hormone
therapy in the form of androgen deprivation [\cite{Harrison2012}]. The initial
response to androgen deprivation therapy (ADT) is frequently positive, leading
to a significant decrease in tumor size. However, most patients eventually
develop resistance to ADT and relapse. A generally acceptable mechanism for
explaining such relapse is the existence of an androgen-independent cancer
cell phenotype that is resistant to secondary endocrine therapy and whose
outgrowth leads to tumor recurrence [\cite{Jackson2004}].

Intermittent androgen suppression (IAS) therapy has been recently proposed as
a strategy for delaying or even preventing time to relapse. The purpose of IAS
is to prevent the exisiting tumor from progressing into an
androgen-independent state. In spite of significant clinical experience with
this approach, defining ideal protocols for any given patient remains one of
the main challenges associated with effective IAS therapy [\cite{Hirata2010}].
In fact, recent clinical trials suggest that the success of IAS ultimately
translates into the ability to tailor on and off-treatment schemes to
individual patients [\cite{Bruchovsky2006}, \cite{Bruchovsky2007}]. The design
of optimal personalized IAS treatment schemes remains, therefore, an unsolved problem.

Recent attempts at addressing this problem have led to the development of
several mathematical models that explain the evolution of prostate cancer
under hormone therapy. The model from \cite{Jackson2004} describes the growth
of prostate tumors formed by two subpopulations of cancer cells, only one of
which is sensitive to androgen deprivation, and successfully reproduces the
experimentally observed three phases of \ tumor evolution; however, the issue
of IAS therapy design was not explicitly addressed. \cite{Ideta2008} applied a
hybrid dynamical system approach to model prostate tumor evolution under IAS
and then used it to study the effect of different\ therapy protocols on tumor
growth and time to relapse through numerical and bifurcation analyses. Several
extensions of the works by \cite{Jackson2004} and \cite{Ideta2008} have been
proposed and we briefly review some of them. A nonlinear model was developed
by \cite{Shimada2008} to account for the competition between different cancer
cell subpopulations, while \cite{Tao2010} proposed a model based on switched
ordinary differential equations. The problem of individualized prostate cancer
treatment was formulated as an optimal control problem for which a piecewise
affine system model was developed by \cite{Suzuki2010}. \cite{Hirata2010}
modeled the prostate tumor under IAS as a feedback control system for the
purpose of patient classification, while \cite{Hirata2010B} solved the model
from \cite{Hirata2010} analytically to derive conditions for patient relapse.

Most of the exisiting models provide insights into the dynamics of prostate
cancer evolution under ADT but do not address the issue of therapy design.
Moreover, previous work focusing on classifying patients into groups in order
to infer optimal treatment schemes have been based on more manageable, albeit
less accurate, approaches to nonlinear hybrid dynamical systems. In contrast,
a nonlinear hybrid automaton model was recently proposed by \cite{Liu2015} and
$\delta$-reachability analysis was used to identify patient-specific therapy
schemes. Although this model was shown to be in good agreement with published
clinical data, it did not account for noise and fluctutations that are
inherently associated with cell population dynamics and monitoring of clinical
data. Stochastic effects were incorporated into a hybrid model of tumor growth
under IAS\ therapy by \cite{Tanaka2010}, but the ensuing analysis was
performed considering a pre-determined therapy scheme, i.e., no design of
personalized therapy was carried out. This paper is motivated by the need to
develop optimal personalized IAS\ therapy based on stochastic models of
prostate cancer evolution and represents a first attempt towards this goal
using a Stochastic Hybrid Automaton (SHA) model of cancer progression.

In this paper, we draw upon the deterministic hybrid automaton model from
\cite{Liu2015} to which we incorporate stochastic effects. We propose a cost
metric in terms of the desired outcome of IAS\ therapy that is parameterized
by a controllable parameter vector, and formulate the problem of optimal
personalized therapy design as the search for the parameter values which
minimize our cost metric. We use Infinitesimal Perturbation Analysis (IPA) for
hybrid systems [\cite{Cassandras2010}] to derive gradient estimates of the
cost metric with respect to the controllable vector of interest, which can be
subsequently incorporated into a standard gradient-based optimization
algorithm. Our main focus is on establishing a framework for IPA applications
to biologically-inspired problems which is illustrated here with the case of
prostate cancer therapy design. The advantages of our approach are twofold:
first, we can obtain sensitivity estimates with respect to the various model
parameters from actual data so as to differentiate critical ones from others
that are not. Moreover, IPA efficiently yields sensitivies with respect to
controllable parameters in a therapy (i.e., control policy), which is arguably
the ultimate goal of personalized therapy design.

In Section 2 we formulate the problem of personalized therapy design based on
an SHA model of prostate cancer evolution. Section 3 presents the general
framework of IPA and details the derivation of IPA estimators for therapy
evaluation and optimization. We include final remarks in Section 4.


\section{PROBLEM FORMULATION}

\subsection{SHA Model of Prostate Cancer Progression}

The system we consider comprises a prostate tumor under IAS therapy, which is
modeled as a Stochastic Hybrid Automaton (SHA). We adopt a standard
SHA\ definition [\cite{Cassandras2008}]:%
\[
G_{h}=\left(  Q,X,E,U,f,\phi,Inv,guard,\rho,q_{0},x_{0}\right)
\]
where $Q$ is a set of discrete states; $X$ is a continuous state space; $E$ is
a finite set of events; $U$ is a set of admissible controls; $f$ is a vector
field, $f:Q\times X\times U\rightarrow X$; $\phi$ is a discrete state
transition function, $\phi:Q\times X\times E\rightarrow Q$; $Inv$ is a set
defining an invariant condition (when this condition is violated at some $q\in
Q$, a transition must occur); $guard$ is a set defining a guard condition,
$guard\subseteq Q\times Q\times X$ (when this condition is satisfied at some
$q\in Q$, a transition is allowed to occur); $\rho$ is a reset function,
$\rho:Q\times Q\times X\times E\rightarrow X$; $q_{0}$ is an initial discrete
state; $x_{0}$ is an initial continuous state.

With this framework in place, we define a SHA model of prostate cancer
progression in terms of the following:

1. \emph{Discrete state set }$Q$. Hormone therapy for prostate cancer consists
of administering medical agents that cause androgen suppression in an effort
to decrease the population of prostate cancer cells and hence the size of the
tumor. A common biomarker used to monitor the efficacy of such treatment is
the serum level of Prostate-Specific Antigen (PSA), whose value provides an
estimate of the size of the prostate cancer population.

In IAS therapy, medication is suspended when a sufficient reduction in the
size of the cancer cell populations is achieved. Since population sizes are
not directly observable, this reduction is estimated in terms of the patient's
PSA\ level; hence, the patient goes off therapy once his PSA reaches a lower
threshold value. Similarly, medication is reinstated once the cancer cell
populations have significantly recovered, which corresponds to when the
patient's PSA\ level reaches an upper threshold value. Thus, we define
$Q=\left\{  q^{ON},q^{OFF}\right\}  $, where $q^{ON}$ ($q^{OFF}$,
respectively) is the on-treatment (off-treatment, respectively) operational
mode of the system.

2. \emph{State space }$X$. The continuous state space $X$ is defined in terms
of the biomarkers commonly monitored during IAS therapy, namely the PSA level
and the androgen concentration in the patient's serum. We assume the
coexistence of two subpopulations of cancer cells within the tumor: Hormone
Sensitive Cells (HSCs) and Castration Resistant Cells (CRCs). The
proliferation of the former is negatively affected by hormone therapy, while
the survival rate of the latter decreases in androgen-rich environments.

We thus define a state vector $x(t)=\left[  x_{1}(t),x_{2}(t),x_{3}(t)\right]
$ with $x_{i}(t)\in\mathbb{R}^{+}$, such that $x_{1}(t)$ is the total
population of HSCs, $x_{2}(t)$ is the total population of CRCs, and $x_{3}(t)$
is the concentration of androgen in the serum. Since prostate cancer cells
secrete high levels of PSA, it is frequently assumed that the serum PSA
concentration can be modeled as a linear combination of the cancer cell
subpopulations, i.e., $c_{1}x_{1}(t)+c_{2}x_{2}(t)$. Another common assumption
is that both HSCs and CRCs secrete PSA equivalently, so that $c_{1}=c_{2}=1$
[\cite{Ideta2008}]. In this work, we adopt these assumptions. We also define a
\textquotedblleft clock\textquotedblright\ state variable $z_{i}%
(t)\in\mathbb{R}^{+}$, $i=1,2$, where $z_{1}(t)$ ($z_{2}(t)$, respectively)
measures the time spent by the system in state $q_{ON}$ ($q_{OFF}$,
respectively). The clock is reset to zero once a state transition takes place.
Setting $z(t)=\left[  z_{1}(t),z_{2}(t)\right]  $, the complete state vector
is $\left[  x(t),z(t)\right]  $.

3. \emph{Event set }$E$. We define the SHA event set as $E=\left\{
e_{1},e_{2}\right\}  $, where $e_{1}$ corresponds to the condition $\left[
x_{1}(t)+x_{2}(t)=\theta_{1}\text{ from above}\right]  $ and $e_{2}$
corresponds to $\left[  x_{1}(t)+x_{2}(t)=\theta_{2}\text{ from below}\right]
$.

4. \emph{Admissible control set }$U$. As described earlier, IAS therapy
consists of cycles of androgen deprivation delivered intermittently with
off-treatment periods. The cycles of androgen deprivation are suspended when
the patient's PSA level reaches a lower threshold value, while therapy
recommences once the PSA level reaches an upper threshold value. Hence, an IAS
therapy can be viewed as a controlled process characterized by two parameters:
$\theta=\left[  \theta_{1},\theta_{2}\right]  \in\Theta$, where $\theta_{1}%
\in\left[  \theta_{1}^{\min},\theta_{1}^{\max}\right]  $ is the lower
threshold value of the patient's PSA\ level, and $\theta_{2}\in\left[
\theta_{2}^{\min},\theta_{2}^{\max}\right]  $ is the upper threshold value of
the patient's PSA\ level, with $\theta_{1}^{\max}<\theta_{2}^{\min}$. At any
time $t$, the feasible control set for the IAS therapy controller is
$U=\left\{  0,1\right\}  $ and the control is defined as:%
\begin{equation}
u\left(  x(t),z(t)\right)  \equiv\left\{
\begin{array}
[c]{ll}%
0 & \text{if }x_{1}(t)+x_{2}(t)<\theta_{2}\text{, }q(t)=q^{OFF}\\
1 & \text{if }x_{1}(t)+x_{2}(t)>\theta_{1}\text{, }q(t)=q^{ON}%
\end{array}
\right.  \label{eq: Control law}%
\end{equation}
This is a simple form of hysteresis control to ensure that hormone therapy
will be suspended whenever a patient's PSA\ level drops below a minimum
threshold value, and that therapy will resume whenever a patient's PSA\ level
reaches a maximum threshold value.

5. \emph{System dynamics}. The SHA system dynamics describe the evolution of
continuous state variables over time, as well as the rules for discrete state
transitions. First, the \emph{continuous (time-driven) dynamics} capture the
prostate cancer cell population dynamics, which are defined in terms of their
proliferation, apoptosis, and conversion rates. Existing studies commonly use
Michaelis-Menten-like functions to model the rates of proliferation and
apoptosis [\cite{Ideta2008}, \cite{Jackson2004}, \cite{Jackson2004B}].
Recently \cite{Liu2015} obtained greater consistency between clinical data and
simulated population dynamics by modeling these rates using sigmoid functions.
In what follows, we incorporate stochastic effects into the deterministic
model from \cite{Liu2015} and obtain:%

\begin{equation}%
\begin{array}
[c]{ll}%
\dot{x}_{1}(t) & =\alpha_{1}\left[  1+e^{-\left(  x_{3}(t)-k_{1}\right)
k_{2}}\right]  ^{-1}\cdot x_{1}(t)\\
& -\beta_{1}\left[  1+e^{-\left(  x_{3}(t)-k_{3}\right)  k_{4}}\right]
^{-1}\cdot x_{1}(t)\\
& -\left[  m_{1}\left(  1-\frac{x_{3}(t)}{x_{3,0}}\right)  +\lambda
_{1}\right]  \cdot x_{1}(t)\\
& +\mu_{1}+\zeta_{1}(t)
\end{array}
\label{eq: HSC dynamics}%
\end{equation}

\begin{equation}%
\begin{array}
[c]{ll}%
\dot{x}_{2}(t)= & \left[  \alpha_{2}\left(  1-d\frac{x_{3}(t)}{x_{3,0}%
}\right)  -\beta_{2}\right]  x_{2}(t)\\
& +m_{1}\left(  1-\frac{x_{3}(t)}{x_{3,0}}\right)  x_{1}(t)+\zeta_{2}(t)
\end{array}
\label{eq: CRC dynamics}%
\end{equation}

\begin{equation}
\dot{x}_{3}(t)=\left\{
\begin{array}
[c]{ll}%
-\frac{x_{3}(t)}{\sigma}+\mu_{3}+\zeta_{3}(t) &
\begin{array}
[c]{l}%
\text{if }x_{1}(t)+x_{2}(t)>\theta_{1}\\
\text{and }q(t)=q^{ON}%
\end{array}
\text{ }\\
\frac{x_{3,0}-x_{3}(t)}{\sigma}+\mu_{3}+\zeta_{3}(t) &
\begin{array}
[c]{l}%
\text{if }x_{1}(t)+x_{2}(t)<\theta_{2}\text{ }\\
\text{and }q(t)=q^{OFF}%
\end{array}
\end{array}
\right.  \label{eq: Androgen dynamics}%
\end{equation}%
\begin{align}
\dot{z}_{1}(t)  &  =\left\{
\begin{array}
[c]{ll}%
1 & \text{if }q(t)=q^{ON}\\
0 & \text{otherwise}%
\end{array}
\right. \label{eq: z1 dynamics}\\
z_{1}(t^{+})  &  =%
\begin{array}
[t]{ll}%
0 &
\begin{array}
[c]{l}%
\text{if }x_{1}(t)+x_{2}(t)=\theta_{1}\\
\text{and }q(t)=q^{ON}%
\end{array}
\text{ }%
\end{array}
\nonumber
\end{align}

\begin{align}
\dot{z}_{2}(t)  &  =\left\{
\begin{array}
[c]{ll}%
1 & \text{if }q(t)=q^{OFF}\\
0 & \text{otherwise}%
\end{array}
\right. \label{eq: z2 dynamics}\\
z_{2}(t^{+})  &  =%
\begin{array}
[t]{ll}%
0 &
\begin{array}
[c]{l}%
\text{if }x_{1}(t)+x_{2}(t)=\theta_{2}\\
\text{and }q(t)=q^{OFF}\text{ }%
\end{array}
\end{array}
\nonumber
\end{align}
where $\alpha_{1}$ and $\alpha_{2}$ are the HSC proliferation constant and CRC
proliferation constant, respectively; $\beta_{1}$ and $\beta_{2}$ are the HSC
apoptosis constant and CRC apoptosis constant, respectively; $k_{1}$ through
$k_{4}$ are HSC proliferation and apoptosis exponential constants; $m_{1}$ is
the HSC to CRC conversion constant; $x_{3,0}$ corresponds to the
patient-specific androgen constant; $\sigma$ is the androgen degradation
constant; $\lambda_{1}$ is the HSC basal degradation rate; $\mu_{1}$ and
$\mu_{3}$ are the HSC basal production rate and androgen basal production
rate, respectively. Finally, $\{\zeta_{i}(t)\}$, $i=1,2,3$, are stochastic
processes which we allow to have arbitrary characteristics and only assume
them to be piecewise continuous w.p. 1.

Observe that (\ref{eq: HSC dynamics}) and (\ref{eq: CRC dynamics}) seem to be
independent of the discrete state (mode) $q(t)$; however, their dependence on
$x_{3}(t)$, whose dynamics are affected by mode transitions, implies that
$x_{1}(t)$, $x_{2}(t)$ also change due to such transitions. To make this
behavior explicit, we can solve (\ref{eq: Androgen dynamics}) for $x_{3}(t)$
and substitute this solution into (\ref{eq: HSC dynamics}) and
(\ref{eq: CRC dynamics}), as detailed next.

Consider a sample path of the system over $[0,T]$ and denote the time of
occurrence of the $k$th event (of any type) by $\tau_{k}(\theta)$. Since our
complete system state vector is $\left[  x(t),z(t)\right]  $, we shall denote
the state dynamics over any interevent interval $\left[  \tau_{k}(\theta
),\tau_{k+1}(\theta)\right)  $ as follows:%
\[
\dot{x}_{n}(t)=f_{n,k}^{x}(t)\text{, }\dot{z}_{i}(t)=f_{i,k}^{z}(t)\text{,
}n=1,\ldots,3\text{, }i=1,2
\]
Although we include $\theta$ as an argument in the expressions above to stress
the dependence on the controllable parameter, we will subsequently drop this
for ease of notation as long as no confusion arises.

We start our analysis by assuming $q(t)=q^{ON}$ for $t\in$ $\left[  \tau
_{k},\tau_{k+1}\right)  $. It is clear from (\ref{eq: Androgen dynamics}) that
$\dot{x}_{3}(t)=-\frac{x_{3}(t)}{\sigma}+\mu_{3}+\zeta_{3}(t)$, which implies
that, for $t\in\left[  \tau_{k},\tau_{k+1}\right)  $,%
\[%
\begin{array}
[c]{ll}%
x_{3}(t) & =x_{3}(\tau_{k}^{+})e^{-\left(  t-\tau_{k}\right)  /\sigma}\\
& +e^{-t/\sigma}\cdot\int_{\tau_{k}}^{t}e^{\varepsilon/\sigma}\left[  \mu
_{3}+\zeta_{3}(\varepsilon)\right]  d\varepsilon
\end{array}
\]
For notational simplicity, let%
\begin{equation}
\tilde{\zeta}_{3}(t)=\int_{\tau_{k}}^{t}e^{-\left(  t-\varepsilon\right)
/\sigma}\zeta_{3}(\varepsilon)d\varepsilon\label{eq: zeta3_tilda}%
\end{equation}
and define, for $t\in\left[  \tau_{k},\tau_{k+1}\right)  $,
\begin{equation}%
\begin{array}
[c]{ll}%
h^{ON}\left(  t,\tilde{\zeta}_{3}(t)\right)  & \equiv x_{3}(\tau_{k}%
^{+})e^{-\left(  t-\tau_{k}\right)  /\sigma}\\
& +\mu_{3}\sigma\lbrack1-e^{-\left(  t-\tau_{k}\right)  /\sigma}]+\tilde
{\zeta}_{3}(t)
\end{array}
\label{eq: h_ON}%
\end{equation}
Now let $q(t)=q^{OFF}$ for $t\in$ $\left[  \tau_{k},\tau_{k+1}\right)  $, so
that (\ref{eq: Androgen dynamics}) implies that, for $t\in\left[  \tau
_{k},\tau_{k+1}\right)  $,%
\[%
\begin{array}
[c]{ll}%
x_{3}(t) & =x_{3}(\tau_{k}^{+})e^{-\left(  t-\tau_{k}\right)  /\sigma}\\
& +(\mu_{3}\sigma+x_{3,0})[1-e^{-\left(  t-\tau_{k}\right)  /\sigma}%
]+\tilde{\zeta}_{3}(t)
\end{array}
\]
Similarly as above, we define, for $t\in\left[  \tau_{k},\tau_{k+1}\right)  $,%
\begin{equation}%
\begin{array}
[c]{ll}%
h^{OFF}\left(  t,\tilde{\zeta}_{3}(t)\right)  & \equiv x_{3}(\tau_{k}%
^{+})e^{-\left(  t-\tau_{k}\right)  /\sigma}\\
& +(\mu_{3}\sigma+x_{3,0})[1-e^{-\left(  t-\tau_{k}\right)  /\sigma}%
]+\tilde{\zeta}_{3}(t)
\end{array}
\label{eq: h_OFF}%
\end{equation}
It is thus clear that%
\[
x_{3}(t)=\left\{
\begin{array}
[c]{ll}%
h^{ON}\left(  t,\tilde{\zeta}_{3}(t)\right)  & \text{if }q(t)=q^{ON}\\
h^{OFF}\left(  t,\tilde{\zeta}_{3}(t)\right)  & \text{if }q(t)=q^{OFF}%
\end{array}
\right.
\]
Although we include $\tilde{\zeta}_{3}(t)$ as an argument in (\ref{eq: h_ON}%
)-(\ref{eq: h_OFF}) to stress the dependence on the stochastic process, we
will subsequently drop this for ease of notation as long as no confusion
arises. It is now clear that, by using (\ref{eq: h_ON})-(\ref{eq: h_OFF}) in
(\ref{eq: HSC dynamics})-(\ref{eq: CRC dynamics}), we may rewrite the state
variable dynamics as%
\begin{equation}
\dot{x}_{1}(t)=\left\{
\begin{array}
[c]{l}%
\begin{array}
[c]{l}%
\left\{  \alpha_{1}\left[  1+\phi_{\alpha}^{ON}(t)\right]  ^{-1}-\beta
_{1}\left[  1+\phi_{\beta}^{ON}(t)\right]  ^{-1}\right. \\
\left.  +m_{1}\left(  \frac{h^{ON}\left(  t\right)  }{x_{3,0}}\right)
-(m_{1}+\lambda_{1})\right\}  \cdot x_{1}(t)\\
+\mu_{1}+\zeta_{1}(t)\text{ \ \ \ \ \ \ \ \ \ \ \ \ \ \ \ \ \ \ \ \ \ \ \ \ if
}q(t)=q^{ON}%
\end{array}
\\%
\begin{array}
[c]{l}%
\left\{  \alpha_{1}\left[  1+\phi_{\alpha}^{OFF}(t)\right]  ^{-1}-\beta
_{1}\left[  1+\phi_{\beta}^{OFF}(t)\right]  ^{-1}\right. \\
\left.  +m_{1}\left(  \frac{h^{OFF}\left(  t\right)  }{x_{3,0}}\right)
-(m_{1}+\lambda_{1})\right\}  \cdot x_{1}(t)\\
+\mu_{1}+\zeta_{1}(t)\text{ \ \ \ \ \ \ \ \ \ \ \ \ \ \ \ \ \ \ \ \ \ \ \ \ if
}q(t)=q^{OFF}%
\end{array}
\end{array}
\right.  \label{eq: x1 dynamics}%
\end{equation}
and%
\begin{equation}
\dot{x}_{2}(t)=\left\{
\begin{array}
[c]{l}%
\begin{array}
[c]{l}%
\left[  \alpha_{2}\left(  1-d\frac{h^{ON}\left(  t\right)  }{x_{3,0}}\right)
-\beta_{2}\right]  x_{2}(t)\\
+m_{1}\left(  1-\frac{h^{ON}\left(  t\right)  }{x_{3,0}}\right)
x_{1}(t)+\zeta_{2}(t)\\
\text{ \ \ \ \ \ \ \ \ \ \ \ \ \ \ \ \ \ \ \ \ \ \ \ \ \ \ \ \ if }q(t)=q^{ON}%
\end{array}
\\%
\begin{array}
[c]{l}%
\left[  \alpha_{2}\left(  1-d\frac{h^{OFF}\left(  t\right)  }{x_{3,0}}\right)
-\beta_{2}\right]  x_{2}(t)\\
+m_{1}\left(  1-\frac{h^{OFF}\left(  t\right)  }{x_{3,0}}\right)
x_{1}(t)+\zeta_{2}(t)\\
\text{ \ \ \ \ \ \ \ \ \ \ \ \ \ \ \ \ \ \ \ \ \ \ \ \ \ \ \ if }q(t)=q^{OFF}%
\end{array}
\end{array}
\right.  \label{eq: x2 dynamics}%
\end{equation}
with%
\begin{align*}
\phi_{\alpha}^{ON}(t)  &  =e^{-\left(  h^{ON}\left(  t\right)  -k_{1}\right)
k_{2}}\\
\phi_{\beta}^{ON}(t)  &  =e^{-\left(  h^{ON}\left(  t\right)  -k_{3}\right)
k_{4}}\\
\phi_{\alpha}^{OFF}(t)  &  =e^{-\left(  h^{OFF}\left(  t\right)
-k_{1}\right)  k_{2}}\\
\phi_{\beta}^{OFF}(t)  &  =e^{-\left(  h^{OFF}\left(  t\right)  -k_{3}\right)
k_{4}}%
\end{align*}

The \emph{discrete (event-driven) dynamics} are dictated by the occurrence of
events that cause state transitions. Based on the event set $E=\left\{
e_{1},e_{2}\right\}  $ we have defined, the occurrence of $e_{1}$ results in a
transition from $q^{ON}$ to $q^{OFF}$ and the occurrence of $e_{2}$ results in
a transition from $q^{OFF}$ to $q^{ON}$.

\subsection{IAS\ Therapy Evaluation and Optimization}

The effect of an IAS\ therapy $u\left(  \theta,t\right)  $ depends on the
controllable parameter vector $\theta$, as in (\ref{eq: Control law}), and can
be quantified in terms of performance metrics of the form $J\left[
\mathbf{u}(\theta,t)\right]  $. Evaluating $J\left[  \mathbf{u}(\theta
,t)\right]  $ over all possible values of $\theta$ is clearly infeasible.
However, there exist very efficient ways to accomplish this goal for
stochastic hybrid systems. In particular, Perturbation Analysis (PA) is a
methodology to efficiently estimate the sensitivity of the performance with
respect to $\theta$, i.e., when $\theta$ is a real-valued scalar, the
derivative $dJ/d\theta$. This is accomplished by extracting data from a sample
path (simulated or actual) of the observed system based on which an unbiased
estimate of $dJ/d\theta$ can indeed be obtained. The attractive feature of PA
is that the resulting estimates are extracted from a single sample path in a
non-intrusive manner and the computational cost of doing so is, in most cases
of interest, minimal [\cite{Cassandras2008}]. This is in contrast to the the
conventional finite difference estimate of $dJ/d\theta$ obtained through
$[J(\theta+\Delta)-J(\theta)]/\Delta$. Thus, for a vector $\theta$ of
dimension $N$, estimating the gradient $\nabla J\left(  \theta\right)  $
requires a single sample path (with some overhead) instead of $N+1$ sample
paths. The simplest family of PA estimators is Infinitesimal Perturbation
Analysis (IPA). For the SHA model we consider here, IPA has been recently
shown to provide unbiased gradient estimates [\cite{Cassandras2010}] for
virtually arbitrary stochastic hybrid systems. Our goal, therefore, is to
adapt IPA estimators of the form $dJ[\mathbf{u}(\theta,t)]/d\theta$ for
different therapies $\mathbf{u}(\theta,t)$, thus estimating their effects, and
to ultimately show that optimal therapy schemes can be designed by solving
problems of the form $\min_{\theta\in\Theta}$ $J[\mathbf{u}(\theta,t)]$.

We define a sample function in terms of complementary measures of therapy
success. In particular, we take the most adequate treatment schemes to be
those that \textit{(i)} ensure PSA levels are kept as low as possible;
\textit{(ii)} reduce the frequency of on and off-treatment cycles. There is an
obvious trade-off between \textit{(i)} and the cost associated with
\textit{(ii)}, which is related to the duration of the therapy and could
potentially include fixed set up costs incurred when therapy is reinstated.
For simplicity, we disconsider fixed set up costs and take \textit{(ii)} to be
linearly proportional to the length of the on-treatment cycles. We thus
associate weights $W_{i}$, $i=1,2$, with \textit{(i)} and \textit{(ii)},
respectively, and define our sample function as the sum of the average PSA
level and the average duration of an on-treatment cycle over a fixed time
interval $\left[  0,T\right]  $. We also normalize our sample function to
ensure that the trade-off between \textit{(i)} and \textit{(ii)} is captured
appropriately: we divide \textit{(i)} by the value of the patient's PSA level
at the start of the first on-treatment cycle ($PSA_{init}$), and note that
\textit{(ii)} is naturally normalized by $T$. Recall that the total population
size of prostate cancer cells is assumed to reflect the serum PSA
concentration, and that we have defined clock variables which measure the time
elapsed in each of the treatment modes, so that our sample function can be
written as%
\begin{equation}%
\begin{array}
[c]{ll}%
L\left(  \theta,x(0),z(0),T\right)  & =\frac{1}{T}\left[  W_{1}\int_{0}%
^{T}\left[  \frac{x_{1}(\theta,t)+x_{2}(\theta,t)}{PSA_{init}}\right]
dt\right. \\
& +\left.  W_{2}\int_{0}^{T}z_{1}(t)dt\right]
\end{array}
\label{eq: Sample fn}%
\end{equation}
where $x(0)$ and $z(0)$ are given initial conditions. We can then define the
overall performance metric as%
\begin{equation}
J\left(  \theta,x(0),z(0),T\right)  =E\left[  L\left(  \theta
,x(0),z(0),T\right)  \right]  \label{eq: Performance fn}%
\end{equation}

Hence, the problem of determining the optimal IAS therapy can be formulated as%
\begin{equation}
\min_{\theta\in\Theta}E\left[  L\left(  \theta,x(0),z(0),T\right)  \right]
\label{eq: Opt problem}%
\end{equation}

\section{INFINITESIMAL PERTURBATION ANALYSIS}

Consider a sample path generated by the SHA\ over $[0,T]$ and recall that we
have defined $\tau_{k}(\theta)$ to be the time of occurrence of the $k$th
event (of any type), where $\theta$ is a controllable parameter vector of
interest. We denote the state and event time derivatives with respect to
parameter $\theta$ as $x^{\prime}(\theta,t)\equiv\frac{dx(\theta,t)}{d\theta}$
and $\tau_{k}^{\prime}(\theta)\equiv\frac{d\tau_{k}(\theta)}{d\theta}$,
respectively, for $k=1,\ldots,N$. As mentioned earlier, the dynamics of
$x(\theta,t)$ are fixed over any interevent interval $\left[  \tau_{k}%
(\theta),\tau_{k+1}(\theta)\right]  $ and we write $\dot{x}(t)=f_{k}\left(
\theta,x,t\right)  $ to represent the state dynamics over this interval.
Although we include $\theta$ as an argument in the expressions above to stress
the dependence on the controllable parameter, we will subsequently drop this
for ease of notation as long as no confusion arises. It is shown in
\cite{Cassandras2010} that the state derivative satisfies%
\begin{equation}
\frac{d}{dt}x^{\prime}(t)=\frac{df_{k}(t)}{dx}x^{\prime}(t)+\frac{df_{k}%
(t)}{d\theta} \label{eq: State deriv}%
\end{equation}
with the following boundary condition:%
\begin{equation}
x^{\prime}(\tau_{k}^{+})=x^{\prime}(\tau_{k}^{-})+\left[  f_{k-1}(\tau_{k}%
^{-})-f_{k}(\tau_{k}^{+})\right]  \cdot\tau_{k}^{\prime}
\label{eq: Boundary cond}%
\end{equation}
We note that (\ref{eq: Boundary cond}) is valid when $x(\theta,t)$ is
continuous in $t$ at $t=\tau_{k}$. If this is not the case and the value of
$x(\tau_{k}^{+})$ is determined by the reset function $\rho\left(
q,q^{\prime},x,e\right)  $, then%
\begin{equation}
x^{\prime}(\tau_{k}^{+})=\frac{d\rho\left(  q,q^{\prime},x,e\right)  }%
{d\theta} \label{eq: Reset fn.}%
\end{equation}
Knowledge of $\tau_{k}^{\prime}$ is, therefore, needed in order to evaluate
(\ref{eq: Boundary cond}). Following the framework in \cite{Cassandras2010},
there are three types of events for a general stochastic hybrid system:
\emph{(i) Exogenous Events}. These events cause a discrete state transition
independent of $\theta$ and satisfy $\tau_{k}^{\prime}=0$. \emph{(ii)
Endogenous Events}. Such an event occurs at time $\tau_{k}$ if there exists a
continuously differentiable function $g_{k}:\mathbb{R}^{n}\times
\Theta\rightarrow\mathbb{R}$ such that $\tau_{k}=\min\left\{  t>\tau
_{k-1}:g_{k}\left(  x(\theta,t),\theta\right)  =0\right\}  $, where the
function $g_{k}$ usually corresponds to a guard condition in a hybrid
automaton. Taking derivatives with respect to $\theta$, it is straighforward
to obtain%
\begin{equation}
\tau_{k}^{\prime}=-\left[  \frac{dg_{k}}{dx}\cdot f_{k-1}(\tau_{k}%
^{-})\right]  ^{-1}\cdot\left(  \frac{dg_{k}}{d\theta}+\frac{dg_{k}}{dx}\cdot
x^{\prime}(\tau_{k}^{-})\right)  \label{eq: Event time deriv}%
\end{equation}
as long as $\frac{dg_{k}}{dx}\cdot f_{k-1}(\tau_{k}^{-})\neq0$. \emph{(iii)
Induced Events}. Such an event occurs at time $\tau_{k}$ if it is triggered by
the occurrence of another event at time $\tau_{m}\leq\tau_{k}$ (details can be
found in \cite{Cassandras2010}).

Returning to our problem of personalized prostate cancer therapy design, we
define the derivatives of the states $x_{n}(\theta,t)$ and $z_{j}(\theta,t)$
and event times $\tau_{k}(\theta)$ with respect to $\theta_{i}$, $i,j=1,2$,
$n=1,\ldots,3$, as follows:%
\begin{equation}
x_{n,i}^{\prime}(t)\equiv\frac{\partial x_{n}(\theta,t)}{\partial\theta_{i}%
}\text{, \ }z_{j,i}^{\prime}(t)\equiv\frac{\partial z_{j}(\theta,t)}%
{\partial\theta_{i}},\text{ }\tau_{k,i}^{\prime}\equiv\frac{\partial\tau
_{k}(\theta)}{\partial\theta_{i}} \label{eq: Deriv def}%
\end{equation}
Our goal is to obtain an estimate of $\nabla J(\theta)$ by evaluating the
sample gradient $\nabla L(\theta)$, which is computed using data extracted
from a sample path of the system (e.g., by simulating a sample path of our
SHA\ model using clinical data). We will assume that the derivatives
$dL/d\theta_{i}$ exist w.p. 1 for all $\theta_{i}\in\mathbb{R}^{+}$. It is
also easy to verify that $L\left(  \theta\right)  $ is Lipschitz continuous
for $\theta_{i}\in\mathbb{R}^{+}$. We will further assume that no two events
can occur at the same time w.p.1. Under these conditions, it has been shown in
\cite{Cassandras2010} that $dL/d\theta_{i}$ is an \textit{unbiased} estimator
of $dJ/d\theta_{i}$, $i=1,2$.

In what follows, we derive the IPA state and event time derivatives for the
events indentified in our SHA model of prostate cancer progression.

\subsection{State and Event Time Derivatives}

We begin by analyzing the state evolution considering each of the states
($q^{ON}$ and $q^{OFF}$) and events ($e_{1}$ and $e_{2}$) defined for our SHA\ model.

1. $q(t)=q^{ON}$ \emph{for }$t\in$\emph{ }$\left[  \tau_{k},\tau_{k+1}\right)
$. Using (\ref{eq: State deriv}) for $x_{1}(t)$, we have, for $i=1,2$,%
\[%
\begin{array}
[c]{ll}%
\frac{d}{dt}x_{1,i}^{\prime}(t) & =\frac{\partial f_{k}^{x_{1}}(t)}{\partial
x_{1}}x_{1}^{\prime}(t)+\frac{\partial f_{k}^{x_{1}}(t)}{\partial x_{2}}%
x_{2}^{\prime}(t)+\frac{\partial f_{k}^{x_{1}}(t)}{\partial x_{3}}%
x_{3}^{\prime}(t)\\
& +\frac{\partial f_{k}^{x_{1}}(t)}{\partial z_{1}}z_{1}^{\prime}%
(t)+\frac{\partial f_{k}^{x_{1}}(t)}{\partial z_{2}}z_{2}^{\prime}%
(t)+\frac{\partial f_{k}^{x_{1}}(t)}{\partial\theta_{i}}%
\end{array}
\]
From (\ref{eq: x1 dynamics}), we have $\frac{\partial f_{k}^{x_{1}}%
(t)}{\partial x_{n}}=\frac{\partial f_{k}^{x_{1}}(t)}{\partial z_{i}}%
=\frac{\partial f_{k}^{x_{1}}(t)}{\partial\theta_{i}}=0$, $n=2,3$, $i=1,2$,
and%
\begin{equation}%
\begin{array}
[c]{ll}%
\frac{\partial f_{k}^{x_{1}}(t)}{\partial x_{1}} & =\alpha_{1}\left[
1+\phi_{\alpha}^{ON}(t)\right]  ^{-1}-\beta_{1}\left[  1+\phi_{\beta}%
^{ON}(t)\right]  ^{-1}\\
& -m_{1}\left(  1-\frac{h^{ON}\left(  t\right)  }{x_{3,0}}\right)
-\lambda_{1}%
\end{array}
\label{eq: Partial x1}%
\end{equation}
Combining the last two equations and solving for $x_{1,i}^{\prime}(t)$, we
obtain%
\begin{equation}
x_{1,i}^{\prime}(t)=x_{1,i}^{\prime}(\tau_{k}^{+})e^{A\left(  t\right)
}\text{, \ \ }t\in\left[  \tau_{k},\tau_{k+1}\right)  \label{eq: x1_prime ON}%
\end{equation}
and, in particular,%
\begin{equation}
x_{1,i}^{\prime}(\tau_{k+1}^{-})=x_{1,i}^{\prime}(\tau_{k}^{+})e^{A\left(
\tau_{k}\right)  }\label{eq: Deriv x1 t1}%
\end{equation}
with%
\[%
\begin{array}
[c]{l}%
A\left(  \tau_{k}\right)  \equiv\int_{\tau_{k}}^{\tau_{k+1}}\left[
\frac{\alpha_{1}}{1+\phi_{\alpha}^{ON}(t)}-\frac{\beta_{1}}{1+\phi_{\beta
}^{ON}(t)}\right]  dt\\
\text{ \ }-\int_{\tau_{k}}^{\tau_{k+1}}\frac{m_{1}}{x_{3,0}}h^{ON}\left(
t\right)  dt-\left(  m_{1}+\lambda_{1}\right)  \left(  \tau_{k+1}-\tau
_{k}\right)
\end{array}
\]
Similarly for $x_{2}(t)$, we have from (\ref{eq: x2 dynamics}) that
$\frac{\partial f_{k}^{x_{2}}(t)}{\partial x_{3}}=\frac{\partial f_{k}^{x_{2}%
}(t)}{\partial z_{i}}=\frac{\partial f_{k}^{x_{2}}(t)}{\partial\theta_{i}}=0$,
$i=1,2$, and%
\begin{equation}%
\begin{array}
[c]{l}%
\frac{\partial f_{k}^{x_{2}}(t)}{\partial x_{1}}=m_{1}\left(  1-\frac
{h^{ON}\left(  t\right)  }{x_{3,0}}\right)  \\
\frac{\partial f_{k}^{x_{2}}(t)}{\partial x_{2}}=\alpha_{2}\left(
1-d\frac{h^{ON}\left(  t\right)  }{x_{3,0}}\right)  -\beta_{2}%
\end{array}
\label{eq: Partial x2}%
\end{equation}
Combining the last two equations and solving for $x_{2,i}^{\prime}(t)$ yields,
for $t\in\left[  \tau_{k},\tau_{k+1}\right)  $,%
\begin{equation}
x_{2,i}^{\prime}(t)=x_{2,i}^{\prime}(\tau_{k}^{+})e^{B_{1}(t)}+B_{2}\left(
t,x_{1,i}^{\prime}(\tau_{k}^{+}),A\left(  t\right)  \right)
\label{eq: x2_prime ON}%
\end{equation}
and, in particular,%
\begin{equation}
x_{2,i}^{\prime}(\tau_{k+1}^{-})=x_{2,i}^{\prime}(\tau_{k}^{+})e^{B_{1}%
(\tau_{k})}+B_{2}\left(  \tau_{k},x_{1,i}^{\prime}(\tau_{k}^{+}),A\left(
\tau_{k}\right)  \right)  \label{eq: Deriv x2 t1}%
\end{equation}
with%
\begin{align*}
&  B_{1}\left(  \tau_{k}\right)  \equiv\int_{\tau_{k}}^{\tau_{k+1}}\left[
\alpha_{2}\left(  1-d\frac{h^{ON}\left(  t\right)  }{x_{3,0}}\right)
-\beta_{2}\right]  dt\\
&  B_{2}\left(  \cdot\right)  \equiv e^{B_{1}(\tau_{k})}\int_{\tau_{k}}%
^{\tau_{k+1}}G_{1}\left(  t,\tau_{k}\right)  e^{-B_{1}(\tau_{k})}dt
\end{align*}
where $G_{1}\left(  t,\tau_{k}\right)  =m_{1}\left(  1-\frac{h^{ON}\left(
t\right)  }{x_{3,0}}\right)  x_{1,i}^{\prime}(\tau_{k}^{+})e^{A\left(
t\right)  }$, $t\in\left[  \tau_{k},\tau_{k+1}\right)  $.

In the case of $x_{3}(t)$, it is clear from (\ref{eq: Androgen dynamics}) that
$\frac{\partial f_{k}^{x_{3}}(t)}{\partial x_{i}}=\frac{\partial f_{k}^{x_{3}%
}(t)}{\partial z_{i}}=\frac{\partial f_{k}^{x_{3}}(t)}{\partial\theta_{i}}=0$,
$i=1,2$, and $\frac{\partial f_{k}^{x_{3}}(t)}{\partial x_{3}}=-\frac
{1}{\sigma}$. Substituting in (\ref{eq: State deriv}) and solving for
$x_{3,i}^{\prime}(t)$, for $i=1,2$, yields $x_{3,i}^{\prime}(t)=x_{3,i}%
^{\prime}(\tau_{k}^{+})e^{-\left(  t-\tau_{k}\right)  /\sigma}$,
$t\in\emph{\ }\left[  \tau_{k},\tau_{k+1}\right)  $, and, in particular,%
\begin{equation}
x_{3,i}^{\prime}(\tau_{k+1}^{-})=x_{3,i}^{\prime}(\tau_{k}^{+})e^{-\left(
t-\tau_{k}\right)  /\sigma} \label{eq: Deriv x3 t1}%
\end{equation}
Finally, in the case of the "clock" state variable $z_{i}(\theta,t)$, $i=1,2$,
based on (\ref{eq: z1 dynamics}) and (\ref{eq: z2 dynamics}), we have
$\frac{\partial f_{k}^{z_{i}}(t)}{\partial x_{n}}=\frac{\partial f_{k}^{z_{i}%
}(t)}{\partial z_{i}}=\frac{\partial f_{k}^{z_{i}}(t)}{\partial\theta_{i}}=0$,
$n=1,\ldots,3$, $i=1,2$, so that $\frac{d}{dt}z^{\prime}(t)=0$ for
$t\in\left[  \tau_{k},\tau_{k+1}\right)  $. This means that the value of the
state derivative of the "clock" variable remains unaltered while $q(t)=q^{ON}%
$, i.e., $z_{i}^{\prime}(t)=z_{i}^{\prime}(\tau_{k}^{+})$, $t\in\left[
\tau_{k},\tau_{k+1}\right)  $.

2. $q(t)=q^{OFF}$ \emph{for }$t\in$\emph{ }$\left[  \tau_{k},\tau
_{k+1}\right)  $. Starting with $x_{1}(t)$, a similar reasoning as above
applies, but now we have%
\[%
\begin{array}
[c]{l}%
\frac{\partial f_{k}^{x_{1}}(t)}{\partial x_{1}}=\alpha_{1}\left[
1+\phi_{\alpha}^{OFF}(t)\right]  ^{-1}-\beta_{1}\left[  1+\phi_{\beta}%
^{OFF}(t)\right]  ^{-1}\\
\text{ \ \ \ \ \ \ \ \ }-m_{1}\left(  1-\frac{h^{OFF}\left(  t\right)
}{x_{3,0}}\right)  -\lambda_{1}%
\end{array}
\]
As a result, (\ref{eq: State deriv}) implies that, for $i=1,2$,%
\begin{equation}
x_{1,i}^{\prime}(t)=x_{1,i}^{\prime}(\tau_{k}^{+})e^{C\left(  t\right)
}\text{, \ }t\in\left[  \tau_{k},\tau_{k+1}\right)  \label{eq: x1_prime OFF}%
\end{equation}
and, in particular,%
\begin{equation}
x_{1,i}^{\prime}(\tau_{k+1}^{-})=x_{1,i}^{\prime}(\tau_{k}^{+})e^{C\left(
\tau_{k}\right)  } \label{eq: Deriv x1 t3minus}%
\end{equation}
with%

\[%
\begin{array}
[c]{l}%
C\left(  \tau_{k}\right)  \equiv\int_{\tau_{k}}^{\tau_{k+1}}\left[
\frac{\alpha_{1}}{1+\phi_{\alpha}^{OFF}(t)}-\frac{\beta_{1}}{1+\phi_{\beta
}^{OFF}(t)}\right]  dt\\
\text{ }-\int_{\tau_{k}}^{\tau_{k+1}}\frac{m_{1}}{x_{3,0}}h^{OFF}\left(
t\right)  dt-\left(  m_{1}+\lambda_{1}\right)  \left(  \tau_{k+1}-\tau
_{k}\right)
\end{array}
\]
Similarly for $x_{2}(t)$, we have%
\[%
\begin{array}
[c]{l}%
\frac{\partial f_{k}^{x_{2}}(t)}{\partial x_{1}}=m_{1}\left(  1-\frac
{h^{OFF}\left(  t\right)  }{x_{3,0}}\right)  \\
\frac{\partial f_{k}^{x_{2}}(t)}{\partial x_{2}}=\alpha_{2}\left(
1-d\frac{h^{OFF}\left(  t\right)  }{x_{3,0}}\right)  -\beta_{2}%
\end{array}
\]
It is thus straightforward to verify that (\ref{eq: State deriv}) yields, for
$t\in\left[  \tau_{k},\tau_{k+1}\right)  $,%
\begin{equation}
x_{2,i}^{\prime}(t)=x_{2,i}^{\prime}(\tau_{k}^{+})e^{D_{1}(t)}+D_{2}\left(
t,x_{1,i}^{\prime}(\tau_{k}^{+}),C\left(  t\right)  \right)
\label{eq: x2_prime OFF}%
\end{equation}
and, in particular,%
\begin{equation}
x_{2,i}^{\prime}(\tau_{k+1}^{-})=x_{2,i}^{\prime}(\tau_{k}^{+})e^{D_{1}%
(\tau_{k})}+D_{2}\left(  \tau_{k},x_{1,i}^{\prime}(\tau_{k}^{+}),C\left(
\tau_{k}\right)  \right)  \label{eq: Deriv x2 tkminus}%
\end{equation}
with%
\begin{align*}
&  D_{1}\left(  \tau_{k}\right)  \equiv\int_{\tau_{k}}^{\tau_{k+1}}\left[
\alpha_{2}\left(  1-d\frac{h^{OFF}\left(  t\right)  }{x_{3,0}}\right)
-\beta_{2}\right]  dt\\
&  D_{2}\left(  \cdot\right)  \equiv e^{D_{1}(\tau_{k})}\int_{\tau_{k}}%
^{\tau_{k+1}}G_{2}\left(  t,\tau_{k}\right)  e^{-D_{1}(\tau_{k})}dt
\end{align*}
where $G_{2}\left(  t,\tau_{k}\right)  =m_{1}\left(  1-\frac{h^{OFF}\left(
t\right)  }{x_{3,0}}\right)  x_{1,i}^{\prime}(\tau_{k}^{+})e^{C\left(
t\right)  }$, $t\in\left[  \tau_{k},\tau_{k+1}\right)  $.

In the case of $x_{3}(t)$, we will have $\frac{\partial f_{k}^{x_{3}}%
(t)}{\partial x_{i}}=\frac{\partial f_{k}^{x_{3}}(t)}{\partial z_{i}}%
=\frac{\partial f_{k}^{x_{3}}(t)}{\partial\theta_{i}}=0$, $i=1,2$, and
$\frac{\partial f_{k}^{x_{3}}(t)}{\partial x_{3}}=-\frac{1}{\sigma}$, so that
(\ref{eq: State deriv}) implies $x_{3,i}^{\prime}(t)=x_{3,i}^{\prime}(\tau
_{k}^{+})e^{-\left(  t-\tau_{k}\right)  /\sigma}$, $t\in\emph{\ }\left[
\tau_{k},\tau_{k+1}\right)  $, and, in particular,
\begin{equation}
x_{3,i}^{\prime}(\tau_{k+1}^{-})=x_{3,i}^{\prime}(\tau_{k}^{+})e^{-\left(
t-\tau_{k}\right)  /\sigma} \label{eq: Deriv x3 t3minus}%
\end{equation}
Finally, in the case of the "clock" state variable $z_{i}(\theta,t)$, $i=1,2$,
based on (\ref{eq: z1 dynamics}) and (\ref{eq: z2 dynamics}), we have
$\frac{\partial f_{k}^{z_{i}}(t)}{\partial x_{n}}=\frac{\partial f_{k}^{z_{i}%
}(t)}{\partial z_{i}}=\frac{\partial f_{k}^{z_{i}}(t)}{\partial\theta_{i}}=0$,
$n=1,\ldots,3$, $i=1,2$, so that $\frac{d}{dt}z^{\prime}(t)=0$ for
$t\in\left[  \tau_{k},\tau_{k+1}\right)  $. This means that the value of the
state derivative of the "clock" variable remains unaltered while
$q(t)=q^{OFF}$, i.e., $z_{i}^{\prime}(t)=z_{i}^{\prime}(\tau_{k}^{+})$,
$t\in\left[  \tau_{k},\tau_{k+1}\right)  $.

3. \emph{A state transition from }$q^{ON}$ \emph{to }$q^{OFF}$ \emph{takes
place at time }$\tau_{k}$. This necessarily implies that event $e_{1}$
occurred at time $\tau_{k}$. From (\ref{eq: Boundary cond}) we have, for
$i=1,2$,%
\begin{equation}
x_{1,i}^{\prime}(\tau_{k}^{+})=x_{1,i}^{\prime}(\tau_{k}^{-})+\left[
f_{k}^{x_{1}}(\tau_{k}^{-})-f_{k+1}^{x_{1}}(\tau_{k}^{+})\right]  \cdot
\tau_{k,i}^{\prime} \label{eq: x1_prime ON/OFF}%
\end{equation}

Observe that, from (\ref{eq: x1 dynamics}), $f_{k}^{x_{1}}(\tau_{k}^{-})$ and
$f_{k+1}^{x_{1}}(\tau_{k}^{+})$ ultimately depend on $h^{ON}\left(  \tau
_{k}^{-}\right)  $ and $h^{OFF}\left(  \tau_{k}^{+}\right)  $, respectively.
Also, a transition from $q^{ON}$ to $q^{OFF}$ at time $\tau_{k}$ implies that
$q(t)=q^{ON}$, $t\in\left[  \tau_{k-1},\tau_{k}\right)  $ and $q(t)=q^{OFF}$,
$t\in\left[  \tau_{k},\tau_{k+1}\right)  $. Hence, evaluating $h^{ON}\left(
\tau_{k}^{-}\right)  $ from (\ref{eq: h_ON}) over the appropriate time
interval results in%
\[%
\begin{array}
[c]{ll}%
h^{ON}\left(  \tau_{k}^{-}\right)  & =x_{3}(\tau_{k-1}^{+})e^{-\left(
\tau_{k}-\tau_{k-1}\right)  /\sigma}\\
& +\mu_{3}\sigma\lbrack1-e^{-\left(  \tau_{k}-\tau_{k-1}\right)  /\sigma
}]+\tilde{\zeta}_{3}(\tau_{k})
\end{array}
\]
and it follows directly from (\ref{eq: h_OFF}) that $h^{OFF}\left(  \tau
_{k}^{+}\right)  =x_{3}(\tau_{k}^{+})$.

Furthermore, by continuity of $x_{n}(t)$ (due to conservation of mass),
$x_{n}(\tau_{k}^{+})=x_{n}(\tau_{k}^{-})$, $n=1,2,3$. Also, since we have
assumed that $\left\{  \zeta_{i}(t)\right\}  $, $i=1,2,3$, is piecewise
continuous w.p.1 and that no two events can occur at the same time w.p.1,
$\zeta_{i}(\tau_{k}^{-})=$ $\zeta_{i}(\tau_{k}^{+})$, $i=1,2,3$. Hence, by
evaluating $\Delta_{f}^{1}\left(  \tau_{k}\right)  \equiv f_{k}^{x_{1}}%
(\tau_{k}^{-})-f_{k+1}^{x_{1}}(\tau_{k}^{+})$ we obtain%
\begin{equation}%
\begin{array}
[c]{l}%
\Delta_{f}^{1}\left(  \tau_{k},\zeta_{3}\left(  \tau_{k}\right)  \right)
=\left\{  \alpha_{1}\left[  1+\phi_{\alpha}^{ON}(\tau_{k}^{-})\right]
^{-1}\right. \\
\text{ }-\alpha_{1}\left[  1+\phi_{\alpha}^{OFF}(\tau_{k}^{+})\right]
^{-1}-\beta_{1}\left[  1+\phi_{\beta}^{ON}(\tau_{k}^{-})\right]  ^{-1}\\
\text{ }+\beta_{1}\left[  1+\phi_{\beta}^{OFF}(\tau_{k}^{+})\right]  ^{-1}\\
\text{ }\left.  +\frac{m_{1}}{x_{3,0}}\left[  h^{ON}\left(  \tau_{k}%
^{-}\right)  -x_{3}(\tau_{k})\right]  \right\}  \cdot x_{1}(\tau_{k})
\end{array}
\label{eq: deltaF1 ON/OFF}%
\end{equation}

Finally, the term $\tau_{k,i}^{\prime}$, which corresponds to the event time
derivative with respect to $\theta_{i}$ at event time $\tau_{k}$, is
determined using (\ref{eq: Event time deriv}), as will be detailed in Lemma 1 later.

A similar analysis applies to $x_{2}(t)$, so that, for $i=1,2$,%
\begin{equation}
x_{2,i}^{\prime}(\tau_{k}^{+})=x_{2,i}^{\prime}(\tau_{k}^{-})+\left[
f_{k}^{x_{2}}(\tau_{k}^{-})-f_{k+1}^{x_{2}}(\tau_{k}^{+})\right]  \cdot
\tau_{k,i}^{\prime} \label{eq: x2_prime ON/OFF}%
\end{equation}
where $\tau_{k,i}^{\prime}$ will be derived in Lemma 1, and $f_{k}^{x_{2}%
}(\tau_{k}^{-})$ and $f_{k+1}^{x_{2}}(\tau_{k}^{+})$ ultimately depend on
$h^{ON}\left(  \tau_{k}^{-}\right)  $ and $h^{OFF}\left(  \tau_{k}^{+}\right)
$, respectively. Hence, evaluating $\Delta_{f}^{2}\left(  \tau_{k}\right)
\equiv f_{k}^{x_{2}}(\tau_{k}^{-})-f_{k+1}^{x_{2}}(\tau_{k}^{+})$ from
(\ref{eq: x2 dynamics}) yields%
\begin{equation}%
\begin{array}
[c]{ll}%
\Delta_{f}^{2}\left(  \tau_{k},\zeta_{3}\left(  \tau_{k}\right)  \right)  &
=\frac{\alpha_{2}d}{x_{3,0}}\left[  x_{3}(\tau_{k})-h^{ON}\left(  \tau_{k}%
^{-}\right)  \right]  \cdot x_{2}(\tau_{k})\\
& -\frac{m_{1}}{x_{3,0}}\left[  h^{ON}\left(  \tau_{k}^{-}\right)  -x_{3}%
(\tau_{k})\right]  \cdot x_{1}(\tau_{k})
\end{array}
\label{eq: deltaF2 ON/OFF}%
\end{equation}

Finally, for $x_{3}(t)$, (\ref{eq: Boundary cond}) can be easily seen to
yield, for $i=1,2$,%

\[
x_{3,i}^{\prime}(\tau_{k}^{+})=x_{3,i}^{\prime}(\tau_{k}^{-})-\frac{x_{3,0}%
}{\sigma}\cdot\tau_{k,i}^{\prime}%
\]
In the case of the "clock" state variable, $z_{1}(t)$ is discontinuous in $t$
at $t=\tau_{k}$, while $z_{2}(t)$ is continuous. Applying (\ref{eq: Reset fn.}%
) to the reset function defined in (\ref{eq: z1 dynamics}) yields
$z_{1,i}^{\prime}(\tau_{k}^{+})=0$, $i=1,2$, i.e., the value of $z_{1,i}%
^{\prime}(t)$ is reset to zero whenever event $e_{1}$ takes place. Based on
(\ref{eq: Boundary cond}) and (\ref{eq: z2 dynamics}), it is simple to verify
that, for $i=1,2$,%
\[
z_{2,i}^{\prime}(\tau_{k}^{+})=z_{2,i}^{\prime}(\tau_{k}^{-})-\tau
_{k,i}^{\prime}%
\]

4. \emph{A state transition from }$q^{OFF}$ \emph{to }$q^{ON}$ \emph{takes
place at time }$\tau_{k}$. This necessarily implies that event $e_{2}$
occurred at time $\tau_{k}$. The same reasoning as above applies and from
(\ref{eq: Boundary cond}) we have, for $i=1,2$,%
\begin{equation}
x_{1,i}^{\prime}(\tau_{k}^{+})=x_{1,i}^{\prime}(\tau_{k}^{-})+\left[
f_{k}^{1}(\tau_{k}^{-})-f_{k+1}^{1}(\tau_{k}^{+})\right]  \cdot\tau
_{k,i}^{\prime} \label{eq: x1_prime OFF/ON}%
\end{equation}
where $f_{k}^{1}(\tau_{k}^{-})-f_{k+1}^{1}(\tau_{k}^{+})$ can be evaluated
from (\ref{eq: x1 dynamics}) and ultimately depends on $h^{OFF}\left(
\tau_{k}^{-}\right)  $ and $h^{ON}\left(  \tau_{k}^{+}\right)  $. Recall that
a transition from $q^{OFF}$ to $q^{ON}$ at time $\tau_{k}$ implies that
$q(t)=q^{OFF}$, $t\in\left[  \tau_{k-1},\tau_{k}\right)  $ and $q(t)=q^{ON}$,
$t\in\left[  \tau_{k},\tau_{k+1}\right)  $. Hence, evaluating $h^{OFF}\left(
\tau_{k}^{-}\right)  $ from (\ref{eq: h_OFF}) over the appropriate time
interval results in%
\[%
\begin{array}
[c]{ll}%
h^{OFF}\left(  \tau_{k}^{-}\right)  & =x_{3}(\tau_{k-1}^{+})e^{-\left(
\tau_{k}-\tau_{k-1}\right)  /\sigma}\\
& +(\mu_{3}\sigma+x_{3,0})[1-e^{-\left(  \tau_{k}-\tau_{k-1}\right)  /\sigma
}]+\tilde{\zeta}_{3}(\tau_{k})
\end{array}
\]
and it follows directly from (\ref{eq: h_ON}) that $h^{ON}\left(  \tau_{k}%
^{+}\right)  =x_{3}(\tau_{k}^{+})$.

As in the previous case, continuity due to conservation of mass applies, so
that evaluating $\Delta_{f}^{1}(\tau_{k})\equiv f_{k}^{1}(\tau_{k}%
^{-})-f_{k+1}^{1}(\tau_{k}^{+})$ yields%
\begin{equation}%
\begin{array}
[c]{l}%
\Delta_{f}^{1}(\tau_{k},\zeta_{3}\left(  \tau_{k}\right)  )=\left\{
\alpha_{1}\left[  1+\phi_{\alpha}^{OFF}(\tau_{k}^{-})\right]  ^{-1}\right. \\
\text{ }-\alpha_{1}\left[  1+\phi_{\alpha}^{ON}(\tau_{k}^{+})\right]
^{-1}-\beta_{1}\left[  1+\phi_{\beta}^{OFF}(\tau_{k}^{-})\right]  ^{-1}\\
\text{ }+\beta_{1}\left[  1+\phi_{\beta}^{ON}(\tau_{k}^{+})\right]  ^{-1}\\
\text{ }\left.  +\frac{m_{1}}{x_{3,0}}\left[  h^{OFF}\left(  \tau_{k}%
^{-}\right)  -x_{3}(\tau_{k})\right]  \right\}  \cdot x_{1}(\tau_{k})
\end{array}
\label{eq: deltaF1 OFF/ON}%
\end{equation}

The term $\tau_{k,i}^{\prime}$ corresponds to the event time derivative with
respect to $\theta_{i}$ at event time $\tau_{k}$ and its derivation will be
detailed in Lemma 1.

Similarly for $x_{2}(t)$, we have, for $i=1,2$,%
\begin{equation}
x_{2,i}^{\prime}(\tau_{k}^{+})=x_{2,i}^{\prime}(\tau_{k}^{-})+\left[
f_{k}^{2}(\tau_{k}^{-})-f_{k+1}^{2}(\tau_{k}^{+})\right]  \cdot\tau
_{k,i}^{\prime} \label{eq: x2_prime OFF/ON}%
\end{equation}
where $\tau_{k,i}^{\prime}$ will be derived in Lemma 1. Evaluating $\Delta
_{f}^{2}(\tau_{k})\equiv f_{k}^{2}(\tau_{k}^{-})-f_{k+1}^{2}(\tau_{k}^{+})$
from (\ref{eq: x2 dynamics}), and making the appropriate simplifications due
to continuity, we obtain%
\begin{equation}%
\begin{array}
[c]{ll}%
\Delta_{f}^{2}(\tau_{k},\zeta_{3}\left(  \tau_{k}\right)  ) & =\frac
{\alpha_{2}d}{x_{3,0}}\left[  x_{3}(\tau_{k})-h^{OFF}\left(  \tau_{k}%
^{-}\right)  \right]  \cdot x_{2}(\tau_{k})\\
& -\frac{m_{1}}{x_{3,0}}\left[  h^{OFF}\left(  \tau_{k}^{-}\right)
-x_{3}(\tau_{k})\right]  \cdot x_{1}(\tau_{k})
\end{array}
\label{eq: deltaF2 OFF/ON}%
\end{equation}

Finally, for $x_{3}(t)$, (\ref{eq: Boundary cond}) can be easily seen to yield
$x_{3,i}^{\prime}(\tau_{k}^{+})=x_{3,i}^{\prime}(\tau_{k}^{-})+\frac{x_{3,0}%
}{\sigma}\cdot\tau_{k,i}^{\prime}$, $i=1,2$.

In the case of the "clock" state variable, $z_{1}(t)$ is continuous in $t$ at
$t=\tau_{k}$, while $z_{2}(t)$ is discontinuous. Based on
(\ref{eq: Boundary cond}) and (\ref{eq: z1 dynamics}), it is simple to verify
that, for $i=1,2$,%
\[
z_{1,i}^{\prime}(\tau_{k}^{+})=z_{1,i}^{\prime}(\tau_{k}^{-})-\tau
_{k,i}^{\prime}%
\]
Applying (\ref{eq: Reset fn.}) to the reset function defined in
(\ref{eq: z2 dynamics}) yields $z_{2,i}^{\prime}(\tau_{k}^{+})=0$, $i=1,2$,
i.e., the value of $z_{2,i}^{\prime}(t)$ is reset to zero whenever event
$e_{2}$ takes place.

Note that, since $z_{j,i}^{\prime}(t)=z_{j,i}^{\prime}(\tau_{k}^{+})$,
$t\in\left[  \tau_{k},\tau_{k+1}\right)  $, we will have that $z_{j,i}%
^{\prime}(\tau_{k}^{-})=z_{j,i}^{\prime}(\tau_{k-1}^{+})$, $j,i=1,2$.
Moreover, the sample path of our SHA consists of a sequence of alternating
$e_{1}$ and $e_{2}$ events, which implies that $z_{1,i}^{\prime}(\tau_{k}%
^{-})=0$ if event $e_{1}$ occurred at $\tau_{k-1}$, while $z_{2,i}^{\prime
}(\tau_{k}^{-})=0$ if event $e_{2}$ occurred at $\tau_{k-1}$. As a result,%
\begin{equation}
z_{1,i}^{\prime}(\tau_{k}^{+})=\left\{
\begin{array}
[c]{ll}%
-\tau_{k,i}^{\prime} & \text{if event }e_{2}\text{ occurs at }\tau_{k}\\
0 & \text{otherwise}%
\end{array}
\right.  \label{eq: State deriv. z1}%
\end{equation}
and%
\begin{equation}
z_{2,i}^{\prime}(\tau_{k}^{+})=\left\{
\begin{array}
[c]{ll}%
-\tau_{k,i}^{\prime} & \text{if event }e_{1}\text{ occurs at }\tau_{k}\\
0 & \text{otherwise}%
\end{array}
\right.  \label{eq: State deriv. z2}%
\end{equation}

We now proceed with a general result which applies to all events defined for
our SHA model. Let us denote the time of occurrence of the $j$th state
transition by $\tau_{j}$, and define its derivative with respect to the
control parameters as $\tau_{j,i}^{\prime}\equiv\frac{\partial\tau_{j}%
}{\partial\theta_{i}}$, $i=1,2$. We also define $f_{j}^{x_{n}}\left(  \tau
_{j}\right)  \equiv\dot{x}_{n}(\tau_{j})$, $n=1,\ldots,3$ and note that at
each state transition at time $\tau_{j}$ an event $e_{p}$, $p=1,2$, will take place.

\begin{lemma}
When an event $e_{p}$, $p=1,2$, occurs, the derivative $\tau_{j,i}^{\prime}$,
$i=1,2$, of state transition times $\tau_{j}$, $j=1,2,\ldots$ with respect to
the control parameters $\theta_{i}$, $i=1,2$, satisfies:%
\begin{equation}
\tau_{j,i}^{\prime}=\frac{\mathbf{1}\left[  p=i\right]  -x_{1}^{\prime}%
(\tau_{j}^{-})-x_{2}^{\prime}(\tau_{j}^{-})}{f_{j-1}^{x_{1}}(\tau_{j}%
^{-})+f_{j-1}^{x_{2}}(\tau_{j}^{-})} \label{eq: Lemma 1}%
\end{equation}
where $\mathbf{1}\left[  p=i\right]  $ is the usual indicator function.
\end{lemma}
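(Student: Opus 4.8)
The plan is to recognize each event $e_p$, $p=1,2$, as an \emph{endogenous} event in the sense of the IPA classification used in the excerpt (it is plainly neither exogenous nor induced) and then to specialize the general event-time-derivative formula (\ref{eq: Event time deriv}) to the particular guard that triggers it. By the definition of the event set $E$, a state transition at $\tau_j$ occurs exactly when $x_1(\theta,t)+x_2(\theta,t)$ reaches the threshold $\theta_p$ — from above when $p=1$ (transition $q^{ON}\to q^{OFF}$), from below when $p=2$ (transition $q^{OFF}\to q^{ON}$) — so the relevant guard function is
\begin{equation*}
g_{j}\left(x(\theta,t),\theta\right)=x_{1}(\theta,t)+x_{2}(\theta,t)-\theta_{p}.
\end{equation*}
This $g_j$ is continuously differentiable in both $x$ and $\theta$, so (\ref{eq: Event time deriv}) is applicable once the nondegeneracy condition $\frac{dg_j}{dx}\cdot f_{j-1}(\tau_j^-)\neq 0$ is verified; I address that point at the end.

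Next I would compute the partial derivatives of $g_j$ that enter (\ref{eq: Event time deriv}). Since $g_j$ involves the continuous state only through $x_1$ and $x_2$ — not through $x_3$, and not through the clock variables $z_1,z_2$ — its state gradient is $(1,1,0)$ on the $x$-block and $0$ on the $z$-block, whence $\frac{dg_j}{dx}\cdot f_{j-1}(\tau_j^-)=f_{j-1}^{x_1}(\tau_j^-)+f_{j-1}^{x_2}(\tau_j^-)$ and $\frac{dg_j}{dx}\cdot x'(\tau_j^-)=x_1'(\tau_j^-)+x_2'(\tau_j^-)$. The only $\theta$-dependence of $g_j$ is through the term $-\theta_p$, so $\partial g_j/\partial\theta_i=-\mathbf{1}\left[p=i\right]$ for $i=1,2$.

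Substituting these three expressions into (\ref{eq: Event time deriv}) gives
\begin{equation*}
\tau_{j,i}^{\prime}=-\left[f_{j-1}^{x_{1}}(\tau_{j}^{-})+f_{j-1}^{x_{2}}(\tau_{j}^{-})\right]^{-1}\left(-\mathbf{1}\left[p=i\right]+x_{1}^{\prime}(\tau_{j}^{-})+x_{2}^{\prime}(\tau_{j}^{-})\right),
\end{equation*}
and distributing the minus sign inside the parenthesis yields precisely (\ref{eq: Lemma 1}). The one step that deserves care — and the one I expect to be the crux — is justifying the nondegeneracy $f_{j-1}^{x_1}(\tau_j^-)+f_{j-1}^{x_2}(\tau_j^-)\neq 0$, i.e.\ that the denominator in (\ref{eq: Lemma 1}) never vanishes. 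This is in fact forced by the way the events are defined: $e_1$ fires when $x_1+x_2$ reaches $\theta_1$ \emph{from above}, so $\frac{d}{dt}(x_1+x_2)=f^{x_1}+f^{x_2}<0$ at $\tau_j$, and symmetrically $e_2$ firing \emph{from below} forces $f^{x_1}+f^{x_2}>0$; in either case the total-population rate is strictly nonzero at the crossing, so (\ref{eq: Event time deriv}) legitimately applies and the claimed identity holds. (Here $x_1',x_2'$ are understood as the derivatives with respect to the same $\theta_i$, the subscript $i$ being suppressed as in the statement.)
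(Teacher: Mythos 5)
Your proposal is correct and follows essentially the same route as the paper: identify $e_p$ as an endogenous event with guard $g_j = x_1 + x_2 - \theta_p$, compute $\partial g_j/\partial x_n$ and $\partial g_j/\partial\theta_i = -\mathbf{1}[p=i]$, and substitute into (\ref{eq: Event time deriv}); the paper simply writes the two cases $p=1,2$ out separately rather than unifying them with the indicator. Your added remark on the nondegeneracy of the denominator goes slightly beyond the paper's proof (which leaves that condition implicit in the general framework), though note that the ``from above/below'' crossing direction gives a sign constraint rather than a fully rigorous exclusion of a tangential crossing.
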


\begin{proof}
We begin with an occurrence of event $e_{1}$ which causes a transition from
state $q^{ON}$ to state $q^{OFF}$ at time $\tau_{j}$. This implies that
$g_{j}(x,\theta)=x_{1}+x_{2}-\theta_{1}=0$. As a result, $\frac{\partial
g_{k}}{\partial x_{1}}=$ $\frac{\partial g_{k}}{\partial x_{2}}=1$,
$\frac{\partial g_{k}}{\partial x_{3}}=\frac{\partial g_{k}}{\partial z_{i}%
}=\frac{\partial g_{k}}{\partial\theta_{2}}=0$, $i=1,2$, and $\frac{\partial
g_{k}}{\partial\theta_{1}}=-1$, and it is simple to verify that
(\ref{eq: Lemma 1}) follows from (\ref{eq: Event time deriv}).

Next, consider event $e_{2}$ at time $\tau_{j}$, leading to a transition from
state $q^{OFF}$ to state $q^{ON}$. In this case, $g_{j}(x,\theta)=x_{1}%
+x_{2}-\theta_{2}=0$, so that $\frac{\partial g_{k}}{\partial x_{1}}=$
$\frac{\partial g_{k}}{\partial x_{2}}=1$, $\frac{\partial g_{k}}{\partial
x_{3}}=\frac{\partial g_{k}}{\partial z_{i}}=\frac{\partial g_{k}}%
{\partial\theta_{1}}=0$, $i=1,2$, and $\frac{\partial g_{k}}{\partial
\theta_{2}}=-1$. Substituting into (\ref{eq: Event time deriv}) once again
yields (\ref{eq: Lemma 1}).
\end{proof}

We note that the numerator in (\ref{eq: Lemma 1}) is determined using
(\ref{eq: Deriv x1 t1}) and (\ref{eq: Deriv x2 t1}) if $q(\tau_{j}^{-}%
)=q^{ON}$, or (\ref{eq: Deriv x1 t3minus}) and (\ref{eq: Deriv x2 tkminus}) if
$q(\tau_{j}^{-})=q^{OFF}$. Moreover, the denominator in (\ref{eq: Lemma 1}) is
computed using (\ref{eq: x1 dynamics})-(\ref{eq: x2 dynamics}) and it is
simple to verify that, if event $e_{1}$ takes place at time $\tau_{j}$,%
\[%
\begin{array}
[c]{l}%
f_{j-1}^{x_{1}}(\tau_{j}^{-})+f_{j-1}^{x_{2}}(\tau_{j}^{-})=\alpha_{1}\left[
1+\phi_{\alpha}^{ON}(\tau_{j}^{-})\right]  ^{-1}\cdot x_{1}(\tau_{j})\\
\text{ \ }-\left\{  \beta_{1}\left[  1+\phi_{\beta}^{ON}(\tau_{j}^{-})\right]
^{-1}+\lambda_{1}\right\}  \cdot x_{1}(\tau_{j})+\mu_{1}\\
\text{ \ }+\left[  \alpha_{2}\left(  1-d\frac{h^{ON}\left(  \tau_{j}%
^{-}\right)  }{x_{3,0}}\right)  -\beta_{2}\right]  \cdot x_{2}(\tau_{j})\\
\text{ \ \ }+\zeta_{1}(\tau_{j})+\zeta_{2}(\tau_{j})
\end{array}
\]
and, if event $e_{2}$ takes place at time $\tau_{j}$,%
\[%
\begin{array}
[c]{l}%
f_{j-1}^{x_{1}}(\tau_{j}^{-})+f_{j-1}^{x_{2}}(\tau_{j}^{-})=\alpha_{1}\left[
1+\phi_{\alpha}^{OFF}(\tau_{j}^{-})\right]  ^{-1}\cdot x_{1}(\tau_{j})\\
\text{ \ \ }-\left\{  \beta_{1}\left[  1+\phi_{\beta}^{OFF}(\tau_{j}%
^{-})\right]  ^{-1}+\lambda_{1}\right\}  \cdot x_{1}(\tau_{j})+\mu_{1}\\
\text{ \ }+\left[  \alpha_{2}\left(  1-d\frac{h^{OFF}\left(  \tau_{j}%
^{-}\right)  }{x_{3,0}}\right)  -\beta_{2}\right]  \cdot x_{2}(\tau_{j})\\
\text{ \ \ }+\zeta_{1}(\tau_{j})+\zeta_{2}(\tau_{j})
\end{array}
\]

This completes the derivation of all state and event time derivatives required
to apply IPA to the hybrid automaton model of prostate cancer. In what
follows, we shall derive the cost derivatives corresponding to the performance
metric defined in (\ref{eq: Sample fn}).

\subsection{Cost Derivative}

Let us denote the total number of on and off-treatment periods (complete or
incomplete) in $\left[  0,T\right]  $ by $K_{T}$. Also let $\xi_{k}$ denote
the start of the $k^{th}$ period and $\eta_{k}$ denote the end of the $k^{th}$
period (of either type). Finally, let $M_{T}=\lfloor\frac{K_{T}}{2}\rfloor$ be
the total number of complete on-treatment periods, and $\Delta_{m}^{ON}$
denote the duration of the $m^{th}$ complete on-treatment period, where
clearly%
\[
\Delta_{m}^{ON}\equiv\eta_{m}-\xi_{m}%
\]

\begin{theorem}
The derivative of the sample function $L(\theta)$ with respect to the control
parameters satisfies:%
\begin{equation}%
\begin{array}
[c]{l}%
\frac{dL(\theta)}{d\theta_{i}}=\frac{W_{1}}{T}\overset{K_{T}}{\underset
{k=1}{\sum}}\int_{\xi_{k}}^{\eta_{k}}\left[  \frac{x_{1,i}^{\prime}%
(\theta,t)+x_{2,i}^{\prime}(\theta,t)}{PSA_{init}}\right]  dt\\
\text{ \ \ }+\frac{W_{2}}{T}\overset{M_{T}}{\underset{m=1}{\sum}}\Delta
_{m}^{ON}\cdot\left(  \eta_{m,i}^{\prime}-\xi_{m,i}^{\prime}\right) \\
\text{ \ \ }-\frac{W_{2}}{T}\mathbf{1}\left[  \lfloor\frac{K_{T}}{2}%
\rfloor\neq\frac{K_{T}}{2}\right]  \cdot\xi_{M_{T}+1,i}^{\prime}\cdot\left(
T-\xi_{M_{T}+1}\right)
\end{array}
\label{eq: Theorem 1}%
\end{equation}

\end{theorem}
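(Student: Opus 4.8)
The plan is to differentiate the sample function $L(\theta)$ in (\ref{eq: Sample fn}) term by term, treating the two integrals separately and carefully accounting for the $\theta$-dependence of the integration limits that arise because the event times $\xi_k(\theta)$, $\eta_k(\theta)$ depend on the controllable parameters. First I would rewrite the integral $\int_0^T [x_1(\theta,t)+x_2(\theta,t)]\,dt$ as the sum over the $K_T$ successive on/off periods, $\sum_{k=1}^{K_T}\int_{\xi_k}^{\eta_k}[x_1+x_2]\,dt$, and apply the Leibniz rule. Because $x_1(t)+x_2(t)$ is continuous in $t$ across every event time (conservation of mass, as already noted in the excerpt, giving $x_n(\tau_k^+)=x_n(\tau_k^-)$), the boundary terms from consecutive periods telescope: the contribution $+[x_1+x_2]\big|_{\eta_k}\,\eta_{k,i}'$ from period $k$ cancels against $-[x_1+x_2]\big|_{\xi_{k+1}}\,\xi_{k+1,i}'$ from period $k+1$ since $\eta_k=\xi_{k+1}$ and the integrand is continuous there. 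The endpoints $t=0$ (fixed, derivative zero) and $t=T$ (fixed, derivative zero) contribute nothing. Hence only the integrand derivative survives, yielding $\sum_{k=1}^{K_T}\int_{\xi_k}^{\eta_k}[x_{1,i}'+x_{2,i}']\,dt$, which is the first term of (\ref{eq: Theorem 1}) after dividing by $PSA_{init}$ and multiplying by $W_1/T$.

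Next I would handle the second integral $\int_0^T z_1(t)\,dt$. Since $z_1(t)$ grows at unit rate during on-treatment periods and is zero (identically, after reset) during off-treatment periods, $\int_0^T z_1(t)\,dt = \sum_{m}\int_{\xi_m}^{\eta_m}(t-\xi_m)\,dt$ over on-treatment periods, which equals $\sum_m \tfrac12(\Delta_m^{ON})^2$ for complete periods, plus a tail term $\tfrac12(T-\xi_{M_T+1})^2$ if the last period in $[0,T]$ is an incomplete on-treatment period (captured by the indicator $\mathbf{1}[\lfloor K_T/2\rfloor \neq K_T/2]$). Differentiating $\tfrac12(\Delta_m^{ON})^2 = \tfrac12(\eta_m-\xi_m)^2$ with respect to $\theta_i$ gives $\Delta_m^{ON}\cdot(\eta_{m,i}'-\xi_{m,i}')$, producing the second term of (\ref{eq: Theorem 1}); differentiating the tail term $\tfrac12(T-\xi_{M_T+1})^2$ gives $-(T-\xi_{M_T+1})\cdot\xi_{M_T+1,i}'$, producing the third. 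One subtlety I would flag explicitly: whether $z_1$ is written via the clock dynamics directly or via the period decomposition, one must confirm that $z_1(\xi_m)=0$ at the start of each on-treatment period (true by the reset in (\ref{eq: z2 dynamics})/(\ref{eq: z1 dynamics})) so that $z_1(t)=t-\xi_m$ holds on $[\xi_m,\eta_m)$, and that the parametrization of periods matches the indexing $M_T=\lfloor K_T/2\rfloor$.

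The main obstacle is bookkeeping rather than analysis: getting the telescoping cancellation airtight for the first integral (which requires invoking continuity of $x_1+x_2$ at every $\tau_k$, already established in the excerpt) and correctly identifying when the final period is a partial on-treatment period versus a partial off-treatment period versus a complete one, since only a partial on-treatment tail contributes the third term. I would also note that the event time derivatives $\xi_{m,i}'=\tau_{k,i}'$ and $\eta_{m,i}'=\tau_{k+1,i}'$ are exactly the quantities supplied by Lemma 1, and that $\eta_m'-\xi_m'$ for an on-treatment period is the difference of an $e_1$-derivative and an $e_2$-derivative, both given by (\ref{eq: Lemma 1}); no further evaluation is needed here since the theorem is stated in terms of these symbols. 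Collecting the three surviving contributions and multiplying through by the weights and $1/T$ gives (\ref{eq: Theorem 1}).
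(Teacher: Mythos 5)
Your proposal is correct and follows essentially the same route as the paper: decompose both integrals of $L$ over the $K_T$ treatment periods, apply the Leibniz rule, and let the boundary terms of the PSA integral telescope via continuity of $x_1+x_2$ at the event times, with the fixed endpoints $0$ and $T$ contributing nothing. The one (harmless) divergence is in the clock term, where you substitute the closed form $z_1(t)=t-\xi_m$ and differentiate $\tfrac{1}{2}\left(\Delta_m^{ON}\right)^2$ directly, whereas the paper keeps $z_1$ as a state variable and reaches the same expression through the IPA clock derivative $z_{1,i}^{\prime}(\xi_m^{+})=-\xi_{m,i}^{\prime}$; both correctly produce the tail term $-\left(T-\xi_{M_T+1}\right)\xi_{M_T+1,i}^{\prime}$ exactly when $K_T$ is odd.
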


\begin{proof}
We assume, without loss of generality, that the start of our sample path will
coincide with the start of the first on-treatment period. Note also that we
choose to end our sample path at time $T$, and that this choice is independent
of $\theta_{i}$, $i=1,2$. Consequently, we will have $\left[  0,T\right]
\equiv\left[  \xi_{1},\eta_{K_{T}}\right]  $, which implies that
$\frac{\partial\xi_{1}}{\partial\theta_{i}}=\frac{\partial\eta_{K_{T}}%
}{\partial\theta_{i}}=0$, $i=1,2$. Recall, from the definition of an
intermittent hormone therapy scheme, that the sample path of our SHA will
consist of alternating on and off-treatment periods. Since $z_{1}(t)=0$ when
$q(t)=q^{OFF}$, we can rewrite (\ref{eq: Sample fn}) as%
\begin{equation}%
\begin{array}
[c]{l}%
L\left(  \theta,x(0),z(0),T\right)  =\frac{W_{1}}{T}\overset{K_{T}}%
{\underset{k=1}{\sum}}\int_{\xi_{k}}^{\eta_{k}}\left[  \frac{x_{1}%
(\theta,t)+x_{2}(\theta,t)}{PSA_{init}}\right]  dt\\
\text{ \ \ \ \ \ \ \ \ \ }+\frac{W_{2}}{T}\left[  \overset{M_{T}}%
{\underset{m=1}{\sum}}\int_{\xi_{k}}^{\eta_{k}}z_{1}(t)dt+\int_{\xi_{M_{T}+1}%
}^{T}z_{1}(t)dt\right]
\end{array}
\label{eq: Sample cost}%
\end{equation}

Note that our sample path can either \textit{(a)} end with an incomplete
on-treatment period, or \textit{(b)} end with an incomplete off-treatment
period. In (\ref{eq: Sample cost}), we assume that \textit{(a)} holds, since
\textit{(b)} is a special case of \textit{(a)} for which $\int_{0}^{T}%
z_{1}(t)dt=\overset{M_{T}}{\underset{m=1}{\sum}}\int_{\xi_{m}}^{\eta_{m}}%
z_{1}(t)dt$. Observe that the end of an on-treatment period is coupled with
the start of the subsequent off-treatment period, i.e., $x_{i}\left(  \eta
_{k}\right)  =x_{i}\left(  \xi_{k+1}\right)  $, $i=1,2$, $k=1,\ldots,K_{T}-1$.
Using this notation and taking the derivative of (\ref{eq: Sample cost})
yields%
\begin{equation}%
\begin{array}
[c]{l}%
\frac{dL(\theta)}{d\theta_{i}}=\frac{W_{1}}{T\cdot PSA_{init}}\overset
{K_{T}-1}{\underset{k=1}{\sum}}\int_{\xi_{k}}^{\xi_{k+1}}\left[
x_{1,i}^{\prime}(\theta,t)+x_{2,i}^{\prime}(\theta,t)\right]  dt\\
+\frac{W_{1}}{T\cdot PSA_{init}}\overset{K_{T}-1}{\underset{k=1}{\sum}}\left[
x_{1}\left(  \xi_{k+1}\right)  +x_{2}\left(  \xi_{k+1}\right)  \right]
\frac{\partial\xi_{k+1}}{\partial\theta_{i}}\\
-\frac{W_{1}}{T\cdot PSA_{init}}\overset{K_{T}-1}{\underset{k=1}{\sum}}\left[
x_{1}\left(  \xi_{k}\right)  +x_{2}\left(  \xi_{k}\right)  \right]
\frac{\partial\xi_{k}}{\partial\theta_{i}}\\
+\frac{W_{1}}{T\cdot PSA_{init}}\int_{\xi_{K_{T}}}^{T}\left[  x_{1,i}^{\prime
}(\theta,t)+x_{2,i}^{\prime}(\theta,t)\right]  dt\\
+\frac{W_{1}}{T\cdot PSA_{init}}\left[  x_{1}(T)+x_{2}(T)\right]
\frac{\partial T}{\partial\theta_{i}}\\
-\frac{W_{1}}{T\cdot PSA_{init}}\left[  x_{1}(\xi_{K_{T}})+x_{2}(\xi_{K_{T}%
})\right]  \frac{\partial\xi_{K_{T}}}{\partial\theta_{i}}\\
+\frac{W_{2}}{T}\overset{M_{T}}{\underset{m=1}{\sum}}\left[  \int_{\xi_{m}%
}^{\eta_{m}}z_{1,i}^{\prime}(t)dt+z_{1}(\eta_{m}^{-})\frac{\partial\eta_{m}%
}{\partial\theta_{i}}-z_{1}(\xi_{m}^{+})\frac{\partial\xi_{m}}{\partial
\theta_{i}}\right]  \\
+\frac{W_{2}}{T}\int_{\xi_{M_{T}+1}}^{T}z_{1,i}^{\prime}(t)dt+z_{1}%
(T^{-})\frac{\partial T}{\partial\theta_{i}}-z_{1}(\xi_{M_{T}+1}^{+}%
)\frac{\partial\xi_{M+1}}{\partial\theta_{i}}%
\end{array}
\label{eq: Sample deriv}%
\end{equation}

Observe that the first two summation terms in (\ref{eq: Sample deriv})
simplify to%
\begin{equation}%
\begin{array}
[c]{c}%
\overset{K_{T}-1}{\underset{k=1}{\sum}}\left[  x_{1}\left(  \xi_{k+1}\right)
+x_{2}\left(  \xi_{k+1}\right)  \right]  \frac{\partial\xi_{k+1}}%
{\partial\theta_{i}}\\
-\overset{K_{T}-1}{\underset{k=1}{\sum}}\left[  x_{1}\left(  \xi_{k}\right)
+x_{2}\left(  \xi_{k}\right)  \right]  \frac{\partial\xi_{k}}{\partial
\theta_{i}}\\
=\left[  x_{1}(\xi_{1})+x_{2}(\xi_{1})\right]  \frac{\partial\xi_{1}}%
{\partial\theta_{i}}\\
+\left[  x_{1}(\xi_{K_{T}})+x_{2}(\xi_{K_{T}})\right]  \frac{\partial
\xi_{K_{T}}}{\partial\theta_{i}}%
\end{array}
\label{eq: Simplification}%
\end{equation}
Further note that the sixth term in (\ref{eq: Sample deriv}) cancels out with
the second term on the right hand side of (\ref{eq: Simplification}).
Moreover, it is clear from (\ref{eq: z1 dynamics}) that $z_{1}(\xi_{M_{T}%
+1}^{+})=z_{1}(\xi_{m}^{+})=0$ and $z_{1}(\eta_{m}^{-})=\eta_{m}-\xi_{m}$,
$m=1,\ldots,M_{T}$. Since $z_{j,i}^{\prime}(t)=z_{j,i}^{\prime}(\tau_{k}^{+}%
)$, $j,i=1,2$, over any interevent interval $\left[  \tau_{k},\tau
_{k+1}\right)  $, and recalling that $\frac{\partial T}{\partial\theta_{i}%
}=\frac{\partial\xi_{1}}{\partial\theta_{i}}=0$, the last two terms in
(\ref{eq: Sample deriv}) simplify to%
\begin{align*}
&  \frac{W_{2}}{T}\overset{M_{T}}{\underset{m=1}{\sum}}\left[  z_{1,i}%
^{\prime}(\xi_{m}^{+})\left(  \eta_{m}-\xi_{m}\right)  +\left(  \eta_{m}%
-\xi_{m}\right)  \frac{\partial\eta_{m}}{\partial\theta_{i}}\right]  \\
&  +\frac{W_{2}}{T}z_{1,i}^{\prime}(\xi_{M_{T}+1}^{+})\left(  T-\xi_{M_{T}%
+1}\right)
\end{align*}
Recall that $\xi_{m}$ is the start of the $m$th on-treatment period, which
necessarily corresponds to the $m-1$th occurrence of event $e_{2}$. Hence, it
follows from (\ref{eq: State deriv. z1}) that $z_{1,i}^{\prime}(\xi_{m}%
^{+})=-\xi_{m,i}^{^{\prime}}$, $m=1,\ldots,M_{T+1}$. As a result,
(\ref{eq: Sample deriv}) can be further simplified to%
\begin{equation}%
\begin{array}
[c]{ll}%
\frac{dL(\theta)}{d\theta_{i}} & =\frac{W_{1}}{T\cdot PSA_{init}}%
\overset{K_{T}-1}{\underset{k=1}{\sum}}\int_{\xi_{k}}^{\xi_{k+1}}\left[
x_{1,i}^{\prime}(\theta,t)+x_{2,i}^{\prime}(\theta,t)\right]  dt\\
& +\frac{W_{1}}{T\cdot PSA_{init}}\int_{\xi_{K_{T}}}^{T}\left[  x_{1,i}%
^{\prime}(\theta,t)+x_{2,i}^{\prime}(\theta,t)\right]  dt\\
& +\frac{W_{2}}{T}\left[  \overset{M_{T}}{\underset{m=1}{\sum}}-\xi
_{m,i}^{^{\prime}}\left(  \eta_{m}-\xi_{m}\right)  +\left(  \eta_{m}-\xi
_{m}\right)  \eta_{m,i}^{^{\prime}}\right]  \\
& -\frac{W_{2}}{T}\xi_{M_{T}+1}^{^{\prime}}\left(  T-\xi_{M_{T}+1}\right)
\end{array}
\label{eq: Sample deriv full}%
\end{equation}
The result in (\ref{eq: Sample deriv full}) is obtained under the assumption
that our sample path ends with an incomplete on-treatment period. This
condition is satisfied when $\lfloor\frac{K_{T}}{2}\rfloor\neq\frac{K_{T}}{2}%
$. If this is not the case, i.e., if the sample path ends with an incomplete
off-treatment period and $\lfloor\frac{K_{T}}{2}\rfloor=\frac{K_{T}}{2}$, the
last term in (\ref{eq: Sample deriv full}) can be disregarded. It is then
straightforward to verify that (\ref{eq: Sample deriv full}) can be rewritten
as (\ref{eq: Theorem 1}).
\end{proof}

Observe that evaluating (\ref{eq: Theorem 1}) requires knowledge of
$x_{1,i}^{\prime}(\theta,t)$ and $x_{2,i}^{\prime}(\theta,t)$ over all on and
off-treatment periods. Over on-treatment periods, this can be determined using
(\ref{eq: x1_prime ON}) and (\ref{eq: x2_prime ON}), which ultimately depend
on (\ref{eq: h_ON}), so that it is necessary to evaluate the integral of the
noise process $\zeta_{3}(t)$. In the case of off-treatment periods,
(\ref{eq: x1_prime OFF}) and (\ref{eq: x2_prime OFF}) must be used, so that
(\ref{eq: h_OFF}) must be evaluated, for which knowledge of the integral of
the noise process $\zeta_{3}(t)$ is also needed. In the second and third terms
in (\ref{eq: Theorem 1}), $\Delta_{m}^{ON}$ can be computed using timers whose
start and end times are observable events, while $\eta_{m,i}^{\prime}$ and
$\xi_{m,i}^{\prime}$, $m=1,\ldots,M_{T}$, and eventually $\xi_{M_{T}%
+1,i}^{\prime}$, can be computed through (\ref{eq: Lemma 1}), which requires
knowledge of the noise processes $\zeta_{1}(t)$ and $\zeta_{2}(t)$ evaluated
at event times only.

\addtolength{\textheight}{-3cm}


\section{CONCLUSION}

Biological systems are inherently sensitive to physiologic cues, such as the
timing and dosage of drugs and related procedures. Hence, performing
sensitivity analysis of the mathematical models that infer patient response to
such cues will aid the development of personalized treatment schemes. Such
sensitivity analysis should not only allow for evaluating the sensitivies with
respect to model parameters, but also, and most importantly, provide
sensitivity estimates with respect to controllable parameters in a therapy.
The methodology we have laid out in this paper addresses both these needs.

Indeed, this work is the first step towards the development of a methodology
for personalized therapy design applicable to stochastic models of cancer
progression. We illustrate our analysis with a case study of optimal
IAS\ therapy design for prostate cancer. For such, we propose an SHA model to
describe the evolution of prostate cancer under IAS therapy and derive a cost
metric in terms of the desired outcome of IAS\ therapy that is parameterized
by an appropriately chosen controllable parameter vector. The problem of
optimal personalized therapy design is then formulated as the search for the
parameter values which minimize our cost metric. In this context, we apply
Infinitesimal Perturbation Analysis (IPA) and derive unbiased gradient
estimates of the cost metric with respect to the controllable vector of
interest, which can be used for sensitivity analysis of therapy schemes.

More importantly, however, since (\ref{eq: Theorem 1}) provides an
\emph{unbiased} estimate of $dJ(\theta)/d\theta_{i}$, it can also be
ultimately used for therapy estimation and optimization. To this end, it is
possible to implement an algorithm for updating the value of $dL(\theta
)/d\theta_{i}$ after each observed event. Such value can then be used to
compute an optimal $\mathbf{\theta}^{\ast}$ through an interative optimization
procedure of the form $\theta_{i,l+1}=\theta_{i,l}-\rho_{l}H_{i,l}\left(
\mathbf{\theta}_{l},x(0),T,\omega_{l}\right)  $, where $\rho_{l}$ is the step
size at the $l$th iteration, $l=0,1,\ldots$, and $\omega_{l}$ denotes a sample
path from which data are extracted and used to compute $H_{i,l}\left(
\mathbf{\theta}_{l},x(0),T,\omega_{l}\right)  $ defined to be an estimate of
$dJ(\theta)/d\theta_{i}$. The sample paths can be generated through simulation
of existing models (e.g., our SHA model), so that, by varying the model
parameters, different patient behaviors can be analyzed. Alternatively, it is
possible to apply our IPA estimators to real patient data obtained from
clinical trials (available e.g., in \cite{Bruchovsky2006} and
\cite{Bruchovsky2007}), and ultimately contrast the optimal therapy scheme
$\mathbf{\theta}^{\ast}$ with the prescribed one. Our ongoing work involves
implementing the IPA estimators derived in this work for personalized
IAS\ therapy design.



\bibliography{CHAbib}

\end{document}